\documentclass[11pt]{amsart}
%
%%%%%%%%%%%%%%%%% Packages %%%%%%%%%%%%%%%%%%%%%%%%%%%%
%\usepackage{ifthen}
\usepackage{amsmath,amssymb}
\usepackage{graphicx, float}

\usepackage{fullpage}
\usepackage{enumerate, enumitem}
\usepackage{bbm}
\usepackage{pgf,tikz}
\usetikzlibrary{arrows}
\usetikzlibrary{positioning}
\usetikzlibrary{patterns}

\def\COMMENT#1{}
\let\COMMENT=\footnote
\def\TASK#1{}

\newdimen\margin   % needed for macros \textdisplay & \ltextdisplay
\def\textno#1&#2\par{%
    \margin=\hsize
    \advance\margin by -4\parindent
           \setbox1=\hbox{\sl#1}%
    \ifdim\wd1 < \margin
       $$\box1\eqno#2$$%
    \else
       \bigbreak
       \hbox to \hsize{\indent$\vcenter{\advance\hsize by -3\parindent
       \sl\noindent#1}\hfil#2$}%
       \bigbreak
    \fi}

\newtheorem{thm}{Theorem}[section]
\newtheorem{define}[thm]{Definition}
\newtheorem{example}[thm]{Example}
\newtheorem{lem}[thm]{Lemma}
\newtheorem{claim}[thm]{Claim}

\newtheorem{prop}[thm]{Proposition}

\newtheorem{question}[thm]{Question}

\newtheorem*{thm*}{Theorem}
\newtheorem*{define*}{Definition}
\newtheorem*{examp*}{Example}
\newtheorem*{lem*}{Lemma}
\newtheorem*{claim*}{Claim}
\newtheorem*{fact*}{Fact}
\newtheorem*{col*}{Corollary}
\newtheorem*{conj*}{Conjecture}

\begin{document}

\title{A note on colour-bias perfect matchings in hypergraphs}

\author{J\'ozsef Balogh,
 Andrew Treglown and Camila Z\'arate-Guer\'en}

\thanks{JB:  University of Illinois Urbana-Champaign, 1409 W. Green Street, Urbana IL 61801, United States.  Research 
\indent  supported in part by NSF grants DMS-1764123 and RTG DMS-1937241, FRG DMS-2152488, the Arnold O. 
\indent Beckman Research Award (UIUC Campus Research Board RB 24012), the Langan Scholar Fund (UIUC).\\
\indent AT: University of Birmingham, United Kingdom, {\tt a.c.treglown@bham.ac.uk}. Research supported by EPSRC \indent grant EP/V002279/1.\\
\indent CZG: University of Birmingham, United Kingdom, {\tt ciz230@student.bham.ac.uk}. Research supported by EPSRC}

\date{}
\begin{abstract}
A result of Balogh, Csaba, Jing and Pluh\'ar yields the minimum degree threshold that ensures a $2$-coloured graph  contains a perfect matching of significant colour-bias (i.e., a perfect matching that contains significantly more than half of its edges in one colour).
In this note we prove an analogous result for perfect matchings in $k$-uniform hypergraphs. More precisely, for each $2\leq \ell <k$ and $r\geq 2$ we determine the minimum $\ell$-degree threshold for forcing a perfect matching of significant colour-bias in  an $r$-coloured $k$-uniform hypergraph.
\end{abstract}
\maketitle

\section{Introduction}
A \emph{perfect matching} in a hypergraph $H$ is a collection of vertex-disjoint edges of $H$ which covers the vertex set $V(H)$ of $H$. 
In recent decades there has been significant interest in the problem of establishing \emph{minimum degree} conditions that force a perfect matching in a $k$-uniform
hypergraph. More precisely, given a $k$-uniform hypergraph $H$ and an $\ell$-element vertex set $S\subseteq V(H)$ (where $ \ell \in [k-1]$) we define
$d_H (S)$ to be the number of edges containing $S$. The \emph{minimum $\ell$-degree $\delta _{\ell} (H)$} 
of $H$ is the minimum of $d_H (S)$ over all $\ell$-element sets of vertices in $H$. We 
refer to  $\delta _1 (H)$ as the \emph{minimum vertex degree} of $H$ and  $\delta _{k-1}
(H)$ as the \emph{minimum codegree} of $H$.

Suppose that $\ell, k, n \in \mathbb N$ such that  $  \ell \le k-1$ and $k$ divides $n$.
Let $m_{\ell}(k, n)$ denote the smallest integer $m$ such that every $k$-uniform hypergraph $H$ on $n$ vertices with $\delta _{\ell}(H)\ge m$ contains a perfect matching. 
%Informally, we refer to the value of $m_{\ell}(k, n)$ as the \emph{minimum $\ell$-degree threshold for perfect matchings in $k$-uniform hypergraphs}.

A simple consequence of Dirac's theorem is that $m_{1}(2, n)=n/2$ for all even $n\in \mathbb N$.
Improving earlier asymptotically exact bounds given in~\cite{ko1, rrs2},
R\"odl, Ruci\'nski and Szemer\'edi~\cite{rrs} determined the minimum codegree threshold for perfect matchings in $k$-uniform hypergraphs. That is, they showed that if 
$n \in \mathbb N$ 
is sufficiently large, then $m_{k-1}(k, n)=n/2 - k + C$, where $C\in \{3/2, 2, 5/2, 3\}$ depends on the values of $n$ and $k$.

The value of $m_{\ell}(k, n)$ is known for various pairs $(k,\ell)$ when $n$ is sufficiently large. For example, after an earlier asymptotic result of Pikhurko~\cite{pik}, Treglown and Zhao~\cite{zhao2} determined the value of $m_{\ell}(k, n)$ for $\ell \geq k/2$ and
$n$  sufficiently large. However, the minimum vertex degree case of the problem is wide open in general, and the only cases where the asymptotic or exact value of $m_{1}(k, n)$
is known is when $k=2,3,4,5$. 
See, e.g.,~\cite{rrsurvey, zsurvey} for  discussions on further results in the area.

Given any $1\leq \ell <k$ it is known that
\begin{align}\label{mlower}
m_{\ell}(k, n) \geq 
\max 
\left \{ 
\frac{1}{2}-o(1), 1- \left ( \frac{k-1}{k}\right )^{k-\ell} -o(1) \right\}\binom{n}{k-\ell}.
\end{align}
See, e.g., the introduction of~\cite{zhao3} for the two families of hypergraphs that demonstrate  (\ref{mlower}). 
It is widely believed that the inequality in (\ref{mlower}) is asymptotically sharp for all choices of $k, \ell$, see~\cite{hps, kosurvey}. Moreover, Treglown and Zhao~\cite{zhao3} gave a  conjecture on the exact value of $m_{\ell}(k, n)$ for sufficiently large $n \in k\mathbb N$. 

\smallskip

The aim of this paper is to  study  the \emph{colour-bias} version of this problem. The topic of colour-bias structures in graphs was first raised by Erd\H{o}s in the 1960s (see~\cite{e2, e1}). 
Sparked by  work of Balogh, Csaba, Jing and Pluh\'ar~\cite{BCsJP}, 
 there has been renewed interest in the topic, particularly in establishing minimum degree conditions that
force a colour-bias copy of a graph $F$.
More precisely, if a graph $G$ contains a copy of $F$, then however  
the edges of $G$ are $2$-coloured, one can clearly ensure that $G$ contains a copy of $F$ with at least $e(F)/2$ edges
of the same colour. The question then is how large does the minimum degree $\delta (G)$ of $G$ need
to be to guarantee that $G$ contains a copy of $F$ with significantly more than $e(F)/2$ edges
of the same colour, no matter how one $2$-colours
the edges of $G$? The following result resolves this problem in the case when $F$ is a Hamilton cycle.
\begin{thm}[Balogh, Csaba, Jing and Pluh\'ar~\cite{BCsJP}]\label{old}
Let  $0<c<1/4$  and $n\in \mathbb N$ be sufficiently large. If $G$ is an $n$-vertex graph with $$\delta(G)\geq(3/4+c)n,$$
 then given any $2$-colouring of $E(G)$ 
there is a Hamilton cycle in $G$ with at least 
$ n/2+cn/32$ edges of the same colour.
Moreover, if $n\in 4 \mathbb N$, there is an $n$-vertex graph $G'$ with $\delta(G')=3n/4$ and
a $2$-colouring of $E(G')$ for which every Hamilton cycle in $G'$ has precisely $n/2$ edges in each colour.
\end{thm}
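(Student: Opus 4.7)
Given the $2$-coloured graph $G$ with $\delta(G)\ge (3/4+c)n$, let $R$ and $B$ denote its colour classes and assume without loss of generality that $|R|\ge |B|$. My plan is to prove the lower bound by a regularity-plus-absorption argument, and to describe the extremal graph $G'$ separately. First apply Szemer\'edi's regularity lemma to $(G,R,B)$ to produce a partition $V_0\cup V_1\cup\ldots\cup V_t$ in which almost every pair $(V_i,V_j)$ is $\varepsilon$-regular in both colour classes. Form the reduced graph $\Gamma$ by joining $V_i$ to $V_j$ whenever the total density is at least some $d=d(c)$, and \emph{label} each edge with the colour whose density in that pair is larger. The minimum degree condition $\delta(G)\ge (3/4+c)n$ transfers to $\delta(\Gamma)\ge (3/4+c/2)\,t$, and a density-averaging argument using $|R|\ge |B|$ shows that the red-labelled edges of $\Gamma$ outnumber the blue-labelled ones by $\Omega(ct^2)$.

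The heart of the argument is to promote this aggregate excess to a single Hamilton cycle $C$ of $\Gamma$ with at least $(1/2+\eta)\,t$ red-labelled edges, for some $\eta=\Omega(c)$. I would do this via a weighted P\'osa-style rotation-extension argument: give red-labelled edges weight $+1$ and blue-labelled edges weight $-1$, start from any Hamilton cycle of $\Gamma$ (which exists by Dirac's theorem since $\delta(\Gamma)\ge 3t/4$), and repeatedly perform local swaps that strictly increase the total weight. The threshold $3/4+c$ is precisely what drives this: with minimum degree exactly $3t/4$ one can construct a balanced obstruction (two halves joined by a bipartite structure forcing every Hamilton cycle to split $50$--$50$ across the colour labels), whereas the slack $ct/2$ in the minimum degree guarantees a weight-improving swap is available whenever the current cycle has weight below $2\eta t$. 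Once $C$ is obtained, blow it up to a Hamilton cycle of $G$ by the standard absorption recipe: build an absorbing path capable of swallowing $V_0$ together with any small leftover set; then for each edge $V_iV_j$ of $C$ build a long path in the corresponding regular pair using predominantly edges of the labelled colour; finally close up through the absorber. Each edge of $C$ contributes roughly $n/t$ edges of the labelled colour to the final Hamilton cycle, so the reduced bias $\eta t$ lifts to a bias of $\Omega(cn)$, and tracking constants yields the claimed $cn/32$ surplus.

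The main obstacle is the weighted rotation-extension step: one must show that the extra $ct/2$ of minimum degree always supplies a local swap that strictly raises the colour weight whenever the current Hamilton cycle has weight less than $2\eta t$, and that these local improvements do not oscillate. For the sharpness half, a matching extremal graph $G'$ with $\delta(G')=3n/4$ is built from a balanced $4$-partition $V(G')=V_1\cup V_2\cup V_3\cup V_4$ with $|V_i|=n/4$, by taking a carefully chosen subgraph of $K_n$ and colouring its edges so that the linear identities among the quantities $e_{V_iV_j}$ (the number of cycle edges between $V_i$ and $V_j$) forced by degree counting into each $V_i$ in any Hamilton cycle, together with the symmetry of the colouring, produce exactly $n/2$ edges of each colour.
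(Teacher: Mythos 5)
This statement is Theorem~1.1 of the paper, which is attributed to Balogh, Csaba, Jing and Pluh\'ar~\cite{BCsJP} and is \emph{not proved} in this paper; the only related material the paper supplies is the explicit extremal construction stated just after Theorem~\ref{cor1} (a bipartition $A\cup B$ with $|A|=n/4$, $|B|=3n/4$, all edges inside $B$ red, all $A$--$B$ edges blue). Your proposal is therefore not competing against a proof given in this manuscript, but against the extremal example the manuscript does record, and it deviates from it: you describe $G'$ via a balanced $4$-partition into classes of size $n/4$ with a ``carefully chosen subgraph'' and a colouring determined by unspecified linear identities, whereas the correct construction is the much simpler two-class graph above, which immediately forces every Hamilton cycle to pick up exactly $n/2$ blue edges at the vertices of $A$ and $n/2$ red edges inside $B$. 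As written, your extremal construction is too vague to verify and is not the one known to be tight.

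On the positive direction, there are two genuine gaps. First, the step ``a density-averaging argument using $|R|\ge|B|$ shows that the red-labelled edges of $\Gamma$ outnumber the blue-labelled ones by $\Omega(ct^2)$'' is unjustified and in general false: $|R|\ge|B|$ gives no control on majority labels of regular pairs, since every pair could have red- and blue-densities both essentially $1/2$, in which case the labels carry no aggregate information. Second, you yourself flag the central missing lemma, namely that whenever the current Hamilton cycle in the reduced graph has colour-weight below $2\eta t$, the slack $ct/2$ in $\delta(\Gamma)$ supplies a weight-increasing rotation that does not oscillate; this is precisely where the threshold $3/4$ must enter, and without an actual argument it is a placeholder rather than a proof. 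Finally, a regularity-based route will not, without substantial extra bookkeeping, reproduce the clean explicit bias $cn/32$ claimed in the statement; the original proof in~\cite{BCsJP} is a direct combinatorial argument rather than a regularity/absorption one, which is why it yields such a constant.
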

Note that Theorem~\ref{old} shows that the minimum degree threshold for forcing a \emph{colour-bias} Hamilton cycle in a graph is significantly higher than the threshold for just forcing a Hamilton cycle. Indeed, Dirac's theorem tells us that any $n$-vertex graph $G$ with $\delta (G)\geq n/2$  contains a Hamilton cycle.

Since a Hamilton cycle on an even number of vertices is the union of two perfect matchings,
Theorem~\ref{old}  implies the following result.
\begin{thm}[Balogh, Csaba, Jing and Pluh\'ar~\cite{BCsJP}]\label{cor1}
Let  $0<c<1/4$  and $n\in 2 \mathbb N$ be sufficiently large. If $G$ is an $n$-vertex graph with $$\delta(G)\geq(3/4+c)n,$$
 then given any $2$-colouring of $E(G)$ 
there is a perfect matching in $G$ with at least 
${n}/{4}+{cn}/{64}$ edges of the same colour.
Moreover, if $n\in 4 \mathbb N$, there is an $n$-vertex graph $G'$ with $\delta(G')=3n/4$ and
a $2$-colouring of $E(G')$ for which every perfect matching in $G'$ has precisely $n/4$ edges in each colour.
\end{thm}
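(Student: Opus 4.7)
The plan is to derive the positive statement directly from Theorem~\ref{old}, and then to produce a separate explicit construction for the extremal example.

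For the positive part, I would feed the given $2$-colouring of $E(G)$ into Theorem~\ref{old} to obtain a Hamilton cycle $H$ with at least $n/2+cn/32$ edges of some colour, say red. Since $n$ is even, $H$ decomposes as $M_1\cup M_2$ into two edge-disjoint perfect matchings of $G$. Pigeonhole then forces one of $M_1,M_2$ to contain at least $n/4+cn/64$ red edges, which is the desired matching.

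For the ``moreover'' part, I would take $V(G')=A\sqcup B\sqcup C$ with $|A|=|B|=n/4$ and $|C|=n/2$, and let $G'$ be $K_n$ with the internal edges of $A$ and of $B$ removed. A quick check gives $\delta(G')=3n/4$: each vertex of $A\cup B$ loses its $n/4-1$ internal neighbours, while each vertex of $C$ keeps all $n-1$ of its neighbours. I would then colour red every edge incident to $A$, and blue every remaining edge. To verify that every perfect matching $M$ of $G'$ is balanced, write $ab, ac, bc, cc$ for the number of edges of $M$ of the respective types (no edges inside $A$ or inside $B$ are available). Matching all vertices of $A$, $B$, and $C$ gives
\[
ab+ac=\tfrac{n}{4},\qquad ab+bc=\tfrac{n}{4},\qquad ac+bc+2cc=\tfrac{n}{2},
\]
which forces $ac=bc$ and $cc=ab$. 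Hence the red count of $M$ is $ab+ac=n/4$ and the blue count is $bc+cc=n/4$.

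The main subtlety is the extremal construction: the extremal graph for Theorem~\ref{old} does not obviously suffice here, because balancing a Hamilton cycle only constrains the \emph{sum} of the two perfect matchings it decomposes into, not each matching individually. The construction above bypasses this by exploiting the rigid structure imposed by the two removed cliques, which makes the type counts (and hence the colour counts) of any perfect matching essentially determined.
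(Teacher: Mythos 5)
Your derivation of the positive part is exactly the paper's: decompose the colour-biased Hamilton cycle from Theorem~\ref{old} into two perfect matchings and apply pigeonhole. Your extremal construction is correct but genuinely different from the paper's. The paper uses a simpler two-class construction: $V(G')=A\sqcup B$ with $|A|=n/4$, $|B|=3n/4$, where $A$ is an independent set, every edge inside $B$ is present and red, and every $A$--$B$ edge is present and blue. Then $\delta(G')=3n/4$, and the balance of every perfect matching is immediate: the $n/4$ vertices of $A$ must be saturated by blue $A$--$B$ edges, leaving $n/2$ vertices of $B$ matched by $n/4$ red edges, so the count is rigid with no system of equations to solve. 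Your tripartite $K_n$ minus two cliques also works, and your count $ab+ac=ab+bc=n/4$, $ac+bc+2cc=n/2$ correctly forces $ac=bc$ and hence red $=ab+ac=n/4$, blue $=bc+cc=n/4$; it is just more machinery than needed. Incidentally, your worry that the extremal graph for Theorem~\ref{old} might not constrain individual matchings is moot for the paper's $G'$, which works simultaneously for Hamilton cycles and for perfect matchings precisely because $A$ is independent.
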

Let $n \in 4 \mathbb N$. We define the graph $G'$ in Theorem~\ref{cor1} as follows:
$V(G')$ consists of the disjoint union of two vertex classes $A$ and $B$ of sizes $n/4$ and $3n/4$,
respectively; $E(G')$ contains all possible red edges whose endpoints are both in $B$ and
all possible blue edges with one endpoint in $A$ and one endpoint in $B$. 
Thus, $\delta(G')=3n/4$ and every perfect matching in $G'$ has precisely $n/4$ edges in each colour.

Since~\cite{BCsJP} appeared, a number of analogues of Theorem~\ref{old} have been established for other types of spanning structures.
Given graphs $G$ and $F$, an \emph{$F$-factor} in $G$ is a collection of vertex-disjoint copies 
of $F$ in $G$ that together cover $V(G)$.
In~\cite{cpc}, the minimum degree threshold for forcing a colour-bias $K_r$-factor was determined.\footnote{Recall $K_r$ denotes the complete graph on $r$ vertices.} More recently, this result was extended to $F$-factors for every fixed graph $F$; see~\cite{brad2}.
For $k \geq 2$, the minimum degree threshold for forcing a colour-bias $k$th power of a Hamilton cycle in a graph was established in~\cite{brad}.

Other variants of the problem have also been studied. In~\cite{fhlt, gkm} an $r$-colour version
of Theorem~\ref{old} was proven: in this setting now one $r$-colours $E(G)$ and seeks a Hamilton cycle with significantly more than $n/r$ edges of the same colour.
Colour-bias problems have also been considered for random graphs~\cite{newkri}.
Recently, Mansilla Brito~\cite{mb} gave a minimum codegree result for forcing 
a colour-bias copy of a tight Hamilton cycle in a $3$-uniform hypergraph.
We remark that all of these colour-bias results can be phrased in the equivalent language of \emph{discrepancy}; see, e.g.,~\cite{BCsJP, cpc, brad, brad2, gkm}.

\smallskip

Our main result determines the minimum $\ell$-degree threshold for forcing a colour-bias perfect matching in a $k$-uniform hypergraph for {all} $\ell \geq 2$ and $k \geq 3$.  To state our result we need the following definitions.
Given  integers $1\leq \ell <k$, let 
$\mathcal C_{k,\ell}$ be the set of all $c>0$ such that 
$m_{\ell} (k,n)\leq c \binom{n}{k-\ell}$ for all sufficiently large $n\in k\mathbb N$. Set $c_{k,\ell}$ to be the infimum of $\mathcal C_{k,\ell}$.
In particular, note that the general conjecture on the asymptotic value of $m_{\ell} (k,n)$
equivalently states that 
\begin{align*}
c_{k,\ell} = 
\max 
\left \{ 
\frac{1}{2}, 1- \left ( \frac{k-1}{k}\right )^{k-\ell}  \right\}.
\end{align*}

\begin{thm}\label{thmmain2}
Let $k, \ell, r \in \mathbb N$ where $2\leq \ell <k$ and $r \geq 2$. Given any $\eta >0$ where $c_{k,\ell}+\eta < 1$, there exists an $n_0 \in \mathbb N$
such that the following holds.
Let $H$ be a $k$-uniform hypergraph on $n \geq n_0$ vertices, where $n \in k \mathbb N$.
If 
$$ \delta _{\ell} (H) \geq  (c_{k,\ell}+\eta) \binom{n}{k-\ell},$$
then  given any $r$-colouring of $E(H)$ there is a perfect matching in $H$ with at least
$\frac{n}{rk}+\frac{\eta n}{8r(r-1)k^k(k^2+k)} $ edges of the same colour.
\end{thm}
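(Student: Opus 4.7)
The plan is to case-split on whether some colour class can be deleted from $H$ while preserving the minimum-$\ell$-degree threshold that forces a perfect matching. Set $\Delta:=\frac{\eta n}{8r(r-1)k^k(k^2+k)}$, the excess over $n/(rk)$ that we aim for. Since $c_{k,\ell}$ is the infimum of $\mathcal{C}_{k,\ell}$, the hypothesis $\delta_\ell(H)\geq(c_{k,\ell}+\eta)\binom{n}{k-\ell}$ already forces $H$ itself to contain a perfect matching for large $n$, so we may always start from one.

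The easy case is when there exists a colour $c\in[r]$ with $\delta_\ell(H-E(H_c))\geq (c_{k,\ell}+\eta/2)\binom{n}{k-\ell}$. Then $H-E(H_c)$ contains a perfect matching $M$; since $M$ uses no colour-$c$ edge, pigeonhole among the remaining $r-1$ colours yields some $c'\neq c$ with
\[|M\cap E(H_{c'})|\geq\left\lceil\tfrac{n/k}{r-1}\right\rceil\geq\tfrac{n}{rk}+\tfrac{n}{rk(r-1)}.\]
Since $\eta<1$, a direct check gives $\Delta\leq n/(rk(r-1))$, so $M$ already realises the required colour bias.

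In the hard case, for every colour $c$ there exists an $\ell$-set $S_c$ with $d_{H-E(H_c)}(S_c)<(c_{k,\ell}+\eta/2)\binom{n}{k-\ell}$, so combining with the main hypothesis yields $d_{H_c}(S_c)>(\eta/2)\binom{n}{k-\ell}$: every colour enjoys a hot $\ell$-set carrying many monochromatic edges. Pick any PM $M_0$ of $H$ and a target colour $c^*$ with hot set $S:=S_{c^*}$, and assume for contradiction that no PM in $H$ has bias $\Delta$ in any colour (so each colour count in any PM lies in $[n/(rk)-(r-1)\Delta,\,n/(rk)+\Delta]$). The plan is to perform local switches: choose an edge $f\in E(H_{c^*})$ with $S\subseteq V(f)$, delete the $\leq k$ edges $M_f:=\{e\in M_0:V(e)\cap V(f)\neq\emptyset\}$ from $M_0$, and cover the $\leq k(k-1)$ residual vertices $V(M_f)\setminus V(f)$ by a small matching $M'$ obtained from the minimum-$\ell$-degree condition. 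The hot set supplies at least $(\eta/2)\binom{n}{k-\ell}$ choices of $f$, so an averaging argument over $f$, combined with a choice of $M'$ minimising colour-$c^*$ loss, should yield a switch strictly increasing $|M_0\cap E(H_{c^*})|$; iterating until the count exceeds $n/(rk)+\Delta$ contradicts the assumption.

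The hard part will be the switching step of the hard case. The challenge is to certify that some switch is strictly gainful even when an adversarial colouring almost balances colours in every PM; this requires tracking colour losses inside $M_f$ (which may itself contain colour-$c^*$ edges) against colour gains from $\{f\}\cup M'$, and averaging these trade-offs over the hot-set-indexed family of candidate switches. The denominator factors $r(r-1)$, $k^k$ and $k^2+k$ in $\Delta$ naturally absorb the losses from pairing ordered colours, from orderings of the ordered $k$-tuples involved in a switch, and from the positions of $S$ inside the colour-$c^*$ edge $f$, respectively.
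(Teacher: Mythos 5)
Your high-level split (delete a colour class if possible; otherwise exploit a ``hot'' $\ell$-set for each colour) is genuinely different from the paper, which instead locates $\Theta(\eta n)$ vertex-disjoint \emph{gadgets} (tiny subhypergraphs each carrying two perfect matchings of different colour profile), pigeonholes to a colour that many gadgets can adjust, and then patches these onto a perfect matching of the remainder. Your easy case is correct and checks out numerically ($\lceil (n/k)/(r-1)\rceil = n/(rk)+n/(rk(r-1)) \geq n/(rk)+\Delta$).

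However, the hard case as described has a genuine gap, and I do not see how to close it along the lines you sketch. First, a matching contains \emph{at most one} edge through the $\ell$-set $S=S_{c^*}$, so the entire family of switches ``insert a $C_{c^*}$-edge $f\supseteq S$'' can contribute a net gain of at most $1$ to the $c^*$-count across \emph{all} iterations: after the first successful swap, every subsequent swap at $S$ merely replaces one $c^*$-edge through $S$ by another. You would need many \emph{vertex-disjoint} hot sets to iterate $\Theta(\eta n)$ times, but the hard-case hypothesis only hands you one hot set per colour; nothing forces many disjoint ones. Moreover, the colour-bias forced by the theorem may live entirely away from the hot sets (e.g.\ when every $C_{c^*}$-edge contains $S_{c^*}$, a setting compatible with the hard-case hypothesis if other colours are arranged similarly), in which case targeting $S_{c^*}$ cannot produce the required $\Theta(\eta n)$ excess. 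Second, and independently, the re-matching step is ill-posed: the residual set $V(M_f)\setminus V(f)$ has constant size $\leq k(k-1)$, and the minimum $\ell$-degree condition on $H$ gives \emph{no} control over edges lying entirely inside a constant-size vertex set, so $M'$ cannot in general be found inside $H[V(M_f)\setminus V(f)]$; if instead $M'$ uses vertices outside $V(M_f)$, those vertices are already covered by $M_0$, and one has to delete and re-cover those edges too, cascading without bound. The paper's gadget construction (Definition~\ref{defgad}, Lemmas~\ref{good3kgadget} and~\ref{gadgetsinH}) is engineered precisely so that each local modification is self-contained on a set of $3k$ or $k^2+k$ vertices and the $\Theta(\eta n)$ modifications are pairwise disjoint, which is what makes the iteration and the final accounting go through.
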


%\begin{thm}\label{thmmain}
%Let $k \geq 3$. Given any $\eta >0$ there exist $\gamma >0$ and $n_0 \in \mathbb N$
%such that the following holds.
%Let $H$ be a $k$-uniform on $n \geq n_0$ vertices where $n \in k \mathbb N$.
%If 
%$$ \delta _{k-1} (H) \geq \left (\frac{1}{2}+ \eta \right ) n$$
%then  given any $2$-colouring of $E(H)$ there is a perfect matching in $H$ with at least
%$(\frac{1}{2k}+\gamma) n$ edges of the same colour.
%\end{thm}
We remark that Theorem~\ref{thmmain2} holds even in the cases in which we do not know the value of $c_{k,\ell}$. By definition of $c_{k,\ell}$,
 the minimum $\ell$-degree condition in Theorem~\ref{thmmain2} is essentially best possible. Indeed, for $c<c_{k,\ell}$, a minimum $\ell$-degree condition of
 $ \delta _{\ell} (H) \geq  c \binom{n}{k-\ell} $ does not even guarantee
a perfect matching, let alone one of significant colour-bias.
So in this sense the colour-bias and `standard' versions of the problem are aligned when $\ell \geq 2$. 

 In contrast, the same phenomenon does not occur for the minimum vertex degree version of the problem. Indeed, 
 Theorem~\ref{cor1}  tells
us that the minimum degree threshold for a colour-bias perfect matching in a \emph{graph} 
is different to the  minimum degree threshold for a  perfect matching in a graph. Furthermore, in Section~\ref{sec:conc} we describe a similar phenomenon in the $3$-uniform hypergraph setting.

%Theorem~\ref{thmmain2} is rather strong in the sense that the colour-bias `kicks in' at a minimum codegree of only a constant above the existence threshold and, moreover, our lower bound on the colour-bias grows linearly as one increases the minimum codegree further.

\smallskip

{\noindent \bf Remark.} Whilst finalising a manuscript 
that gave the proof of Theorem~\ref{thmmain2} in the case when $\ell=k-1$ and $r=2$,
we learnt of simultaneous and independent work of Gishboliner, Glock and Sgueglia~\cite{ggs}. In~\cite{ggs} they 
determine the \emph{minimum codegree threshold} for forcing a tight Hamilton cycle of significant colour-bias in an $r$-coloured $k$-uniform hypergraph (where $r \geq 2$ and $k \geq3$). As an immediate consequence of their result they also
establish the corresponding \emph{minimum codegree threshold} for perfect matchings.

We therefore decided to seek a generalisation of our minimum codegree result to other degree conditions, i.e., Theorem~\ref{thmmain2}. In doing so, we found an argument much cleaner than our original approach.

\smallskip

{\noindent \bf Notation.}
Let $H$ be a hypergraph. 
The \textit{neighbourhood $N_H(X)$ of a set $X\subseteq V(H)$} is the family of  sets $S\subseteq V(H)\setminus X$ such that $S\cup X\in E(H)$.
If $X=\{x\}$ we define $N_H(x):=N_H(X)$.
%The \emph{degree $d_H(x)$ of $x$} is defined as
%$d_H(x):=|N_H(x)|$. 
%Given $r\in \mathbb N$, an
%\emph{$r$-matching} in $H$ is a collection of $r$ vertex-disjoint edges.
Given a vertex $x \in V(H)$ and set $Y \subseteq V(H)$ we sometimes write $x Y$ or $Yx$ to denote $\{x\}\cup Y$.
Given a colouring $c$ of $E(H)$, we call an edge $e \in E(H)$ a \emph{$C$-edge} if $e$ is coloured $C$ in $c$.
Given a set $X\subseteq V(H)$, we write $H[X]$ for the \emph{induced subhypergraph of $H$ with vertex set $X$}. We define $H\setminus X:=H[V(H)\setminus X]$.

% Throughout the paper whenever we say a hypergraph $F$ is $2$-coloured we implicitly mean $E(F)$ is $2$-coloured with colours red and blue.
Given a hypergraph $F$ with an $r$-colouring $c: E(F)\to \{C_1, \dots, C_r\}$, its \emph{colour profile} is $(x_1, \dots,x_r)$ where $x_i$ is the number of $C_i$-edges in $F$ for each $i\in[r]$. Two colour profiles $(x_1, \dots, x_r)$, $(y_1, \dots, y_r)$ are said to be \emph{different with respect to the colour $C_i$} if $x_i\neq y_i$.

% Let $G$ be a graph and let $\ell\in \mathbb N$. The \emph{$\ell$-{blow-up} $G_\ell$ of $G$} is the graph  with vertex classes $\{V_x\}_{x\in V(G)}$, each of size $\ell$, such that  $uv\in E(G_\ell)$ for each $u\in V_x$, $v\in V_y$ if and only if $xy \in E(G)$.

% Given a set $X$ and $x \in X$, we define $X\setminus x:=X\setminus \{x\}$.
%For a set of edges $F$, we say $F_e$ corresponds to $F\cup\{e\}$.

 %Let $a,b\in \mathbb{N}$. We say $K_{a,b}$ corresponds to a complete bipartite graph with vertex classes $A$ and $B$, of sizes $a$ and $b$, respectively. 

\section{Preliminaries and useful results}\label{sec:pre}
\subsection{Proof overview and key definitions}
Throughout this section, we will suppose that $H$ is a $k$-uniform hypergraph on $n$ vertices with an $r$-colouring $c: E(H)\to \{C_1, \dots, C_r\}$. 

Our general strategy for the proof of Theorem~\ref{thmmain2} is as follows. 
Our aim is to find certain \emph{gadgets} inside of $H$. A gadget  is just a subhypergraph of $H$ with some given structure. 
 A gadget $G$ is \emph{good} if $G$ contains two  perfect matchings that have different colour profiles with respect to the $r$-colouring $c$.

For a  certain well chosen $t \in \mathbb N$, we will prove that there are
 $t$  vertex-disjoint good gadgets $G_1,\dots, G_t$ in $H$ 
 and a $j \in [r]$ so that, for each good gadget $G_i$, the two perfect matchings $M_i$ and $M'_i$ in $G_i$  have colour profiles that are different with respect to the colour $C_j$.

We will then be able to easily find a perfect matching in $H$ of significant colour-bias. Indeed, removing the vertices of $G_1,\dots,G_t$ from $H$ will result in a
$k$-uniform hypergraph $H'$ that  contains a perfect matching $M$. The flexibility of the good gadgets then allows us to extend $M$ into a perfect matching in $H$ with significant colour-bias, whatever the colour profile of $M$ is.

We next state the definitions required to formally introduce the notion of a good gadget.

\begin{define}
Let $u,v\in V(H)$ be distinct and  $T\in N_H(u)\cap N_H(v)$. We say $uTv$ is
\begin{itemize}
    \item $\bf S$ if $c(T\cup\{u\})=c(T\cup\{v\})$; or
    \item $\mathbf{C_iC_j}$ if $c(T\cup\{u\})=C_i$ and $c(T\cup\{v\})=C_j$.
\end{itemize}
Let $C_iC_j(uv)$ denote the collection of sets $T\in N_H(u)\cap N_H(v)$ for which $uTv$ is $C_iC_j$. Define $S(uv)$ analogously.
\end{define}
Note that $C_iC_j(uv)=C_jC_i(vu)$ for all distinct $u,v \in V(H)$.

\begin{define}\label{def3.2}
Let $D>0$ and let  $u,v\in V(H)$ be distinct. We say that $N_H(u)\cap N_H(v)$ is
\begin{itemize}
    \item \textbf{type $\bf S(D)$} if $|S(uv)|\geq D n^{k-2}$; 
    \item \textbf{type $\bf C_iC_j(D)$} if $i\neq j$ and $|C_iC_j(uv)|\geq D n^{k-2}$.
\end{itemize}    
%If $N_H(u)\cap N_H(v)$ classifies as multiple types, then we arbitrarily choose only one of them.
\end{define} 
We remark that it may be the case that $N_H(u)\cap N_H(v)$ has more than one type.

\begin{define}\label{defgad}
    Let $e=\{e_1, \dots, e_k\}$ and $f=\{f_1, \dots, f_k\}$  be two  edges in $H$. A {\bf $\bf (k^2+k,e,f)$-gadget} $G$ is a subhypergraph of $H$ on $k^2+k$ vertices so that:
    \begin{itemize}
    \item   $V(G)$ is the disjoint union of $e$, $f$ and $T_1,\dots, T_k $ where $T_i\in N_H(e_i)\cap N_H(f_i)$ for each $i\in[k]$;
    \item $e,f \in E(G)$;
        \item  $e_iT_i, f_iT_i \in E(G)$ for all $i\in[k]$.
\end{itemize}
A $(k^2+k,e,f)$-gadget in which every $e_iT_if_i$ is $S$ will be called an \textbf{S-$\mathbf{(k^2+k,e,f)}$-gadget}.

A {\bf $\bf (3k,e,f)$-gadget} $G$ is a subhypergraph of $H$ on $3k$ vertices so that:
    \begin{itemize}
        \item $e_i = f_i$, for all $i\in\{3,\dots, k\}$;
        \item $V(G)$ is the disjoint union of $e$, $f_1$, $f_2$, $T_1$ and $T_2$, where $T_i\in N_H(e_i)\cap N_H(f_i)$ for each $i\in [2]$;
        \item $e,f\in E(G)$;
        \item $e_1T_1, f_1T_1, e_2T_2, f_2T_2  \in E(G)$.
    \end{itemize}
Given $t \in \{3k, k^2+k\}$,
    we say that a $(t,e,f)$-gadget $G$ is \textbf{good} if it contains two perfect matchings with different colour profiles (with respect to the $r$-colouring of $G$ induced by the $r$-colouring $c$ of $H$). %Otherwise, we say $G$ is {\bf bad}.
\end{define}
Note that $e$ and $f$ are vertex-disjoint in a $(k^2+k,e,f)$-gadget but intersect in $k-2$ vertices in a $(3k,e,f)$-gadget; see Figure~\ref{fig:enter-label}.
\begin{figure}[H]
    \centering
    \includegraphics[scale=1.5]{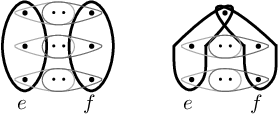}
    \caption{On the left, a $(12, e, f)$-gadget. On the right, a $(9,e,f)$-gadget.}
    \label{fig:enter-label}
\end{figure}

\subsection{Tools for the proof of Theorem~\ref{thmmain2}}
The following well-known result allows one to deduce a lower bound on $\delta_{\ell }(H)$ given a lower bound on $\delta_{\ell'}(H)$,  for any $\ell \leq \ell'$.
\begin{prop}\label{propdegree}
    Let $1\leq \ell \leq \ell' < k$ and $H$ be a $k$-uniform hypergraph on $n$ vertices. If $\delta_{\ell'}(H)\geq x{n-\ell' \choose k-\ell'}$ for some $0\leq x\leq 1$, then $\delta_{\ell}(H)\geq x{n-\ell \choose k-\ell}$. \qed
\end{prop}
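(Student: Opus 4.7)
The plan is to prove this by a standard double-counting argument applied to an arbitrary $\ell$-set. Fix an $\ell$-set $S \subseteq V(H)$; I want to show $d_H(S) \geq x\binom{n-\ell}{k-\ell}$. To do so, I will count in two ways the number $N$ of pairs $(S', e)$ with $|S'| = \ell'$, $S \subseteq S' \subseteq e$, and $e \in E(H)$.

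Counting first by summing over edges $e \supseteq S$: each such $e$ has $k-\ell$ vertices outside $S$, from which $\ell'-\ell$ must be chosen to form $S'$, so $e$ contributes $\binom{k-\ell}{\ell'-\ell}$ pairs. This gives $N = d_H(S)\binom{k-\ell}{\ell'-\ell}$. Counting instead by summing over $\ell'$-sets $S' \supseteq S$: there are $\binom{n-\ell}{\ell'-\ell}$ such $S'$, and each contributes $d_H(S') \geq x\binom{n-\ell'}{k-\ell'}$ edges by hypothesis. Therefore
\[
d_H(S)\binom{k-\ell}{\ell'-\ell} \;=\; N \;\geq\; x\binom{n-\ell}{\ell'-\ell}\binom{n-\ell'}{k-\ell'}.
\]

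To finish, I would verify the elementary binomial identity
\[
\binom{n-\ell}{\ell'-\ell}\binom{n-\ell'}{k-\ell'} \;=\; \binom{n-\ell}{k-\ell}\binom{k-\ell}{\ell'-\ell},
\]
which holds because both sides equal $\frac{(n-\ell)!}{(\ell'-\ell)!\,(k-\ell')!\,(n-k)!}$ (each counts the number of ways to split the $n-\ell$ vertices outside $S$ into an $(\ell'-\ell)$-subset, a $(k-\ell')$-subset, and the remainder). Dividing the previous inequality by $\binom{k-\ell}{\ell'-\ell}$ then yields $d_H(S) \geq x\binom{n-\ell}{k-\ell}$, as required. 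Since $S$ was an arbitrary $\ell$-set, the conclusion follows.

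There is no real obstacle here; the only step requiring any care is the identity above, and even that is immediate once one recognises the common factorial expression. The argument is insensitive to the hypergraph structure and uses only that minimum $\ell'$-degree bounds give averaged $\ell$-degree bounds.
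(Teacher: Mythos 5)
Your double-counting argument is correct and is the standard way to prove this averaging fact; the paper itself omits the proof, stating the proposition as ``well-known'' and marking it with \(\square\). Both the count of pairs \((S',e)\) and the binomial identity \(\binom{n-\ell}{\ell'-\ell}\binom{n-\ell'}{k-\ell'}=\binom{n-\ell}{k-\ell}\binom{k-\ell}{\ell'-\ell}\) check out, so there is nothing to add.
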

The next result gives a sufficient condition for finding a  good $(3k, e, f)$-gadget in a $k$-uniform hypergraph of large minimum $2$-degree.
\begin{lem}\label{good3kgadget}
Let $k\geq 3$ and  $D:=3k$. Let $H$ be a $k$-uniform hypergraph on $n$ vertices with an $r$-colouring $c:E(H)\to \{C_1, \dots, C_r\}$. Suppose there exists $i\neq j\in [r]$ and distinct $v_1,v_2,v_3,v_4\in V(H)$ such that $N_H(v_1)\cap N_H(v_2)$ and $N_H(v_3)\cap N_H(v_4)$ are both type $C_iC_j(D)$. If
\[\delta_2(H) > \frac{1}{2} {n \choose k-2},\]
then there exists a good $(3k, e, f)$-gadget in $H$, for some $e,f\in E(H)$.
\end{lem}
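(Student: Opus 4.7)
My approach is to construct a $(3k, e, f)$-gadget in which the two forced perfect matchings must differ in their count of colour $C_i$. Concretely, I will set $e_1 := v_1$, $f_1 := v_2$, $e_2 := v_3$, $f_2 := v_4$, so that picking $T_1 \in C_iC_j(v_1 v_2)$ and $T_2 \in C_iC_j(v_3 v_4)$ immediately forces $c(v_1 T_1) = c(v_3 T_2) = C_i$ and $c(v_2 T_1) = c(v_4 T_2) = C_j$. It then only remains to produce a common $(k-2)$-completion $S$ serving as $\{e_3, \dots, e_k\} = \{f_3, \dots, f_k\}$, with $S$, $T_1$, $T_2$, $\{v_1, v_2, v_3, v_4\}$ pairwise disjoint.

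The crux is finding $S$. Let $A$ (respectively $B$) denote the collection of $(k-2)$-subsets $X$ of $V(H)$ with $\{v_1, v_3\} \cup X \in E(H)$ (respectively $\{v_2, v_4\} \cup X \in E(H)$); then $|A|, |B| \geq \delta_2(H) > \frac12 \binom{n}{k-2}$, so inclusion-exclusion inside the universe $\binom{V(H)}{k-2}$ gives $|A \cap B| \geq |A| + |B| - \binom{n}{k-2} > 0$, yielding some $S \in A \cap B$. Any such $S$ is automatically disjoint from $\{v_1, v_2, v_3, v_4\}$, since otherwise one of the two would-be edges would have size less than $k$. I then pick $T_1 \in C_iC_j(v_1 v_2)$ disjoint from the $k$-set $S \cup \{v_3, v_4\}$: this is possible since $|C_iC_j(v_1 v_2)| \geq Dn^{k-2} = 3k n^{k-2}$ dominates the $O(n^{k-3})$ many $(k-2)$-subsets of $V(H) \setminus \{v_1, v_2\}$ that hit that $k$-set, once $n$ is sufficiently large. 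Choosing $T_2 \in C_iC_j(v_3 v_4)$ disjoint from the $(2k-2)$-set $S \cup T_1 \cup \{v_1, v_2\}$ is entirely analogous.

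Setting $e := \{v_1, v_3\} \cup S$ and $f := \{v_2, v_4\} \cup S$, the subhypergraph $G$ on vertex set $e \cup \{v_2, v_4\} \cup T_1 \cup T_2$ with edge set $\{e, f, v_1 T_1, v_2 T_1, v_3 T_2, v_4 T_2\}$ is a $(3k, e, f)$-gadget. Its only perfect matchings are $M_1 = \{e, v_2 T_1, v_4 T_2\}$ and $M_2 = \{f, v_1 T_1, v_3 T_2\}$, since $e$ and $f$ are the only edges of $G$ containing any vertex of $S$. By construction $M_1$ uses colour $C_i$ at most once (only possibly on $e$) while $M_2$ uses colour $C_i$ at least twice (on $v_1 T_1$ and $v_3 T_2$), so the two colour profiles necessarily differ on the $C_i$ coordinate regardless of $c(e)$ and $c(f)$, which is precisely what it means for $G$ to be good. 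The most delicate point is the inclusion-exclusion step producing $S$: the hypothesis $\delta_2(H) > \frac12 \binom{n}{k-2}$ is used at its limit, with essentially no slack in the bound $|A| + |B| > \binom{n}{k-2}$, whereas the supply $Dn^{k-2}$ of type-$C_iC_j$ neighbourhoods comfortably absorbs the disjointness error terms for $T_1, T_2$ once $n$ is large.
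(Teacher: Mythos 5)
Your proof takes essentially the same route as the paper's: find a common $(k-2)$-completion $S$ for $\{v_1,v_3\}$ and $\{v_2,v_4\}$ via inclusion--exclusion from $\delta_2(H)>\tfrac12\binom{n}{k-2}$, pick $T_1\in C_iC_j(v_1v_2)$ and $T_2\in C_iC_j(v_3v_4)$ avoiding previously used vertices, and observe that the two perfect matchings of the resulting $(3k,e,f)$-gadget differ in their $C_i$-count. The construction, the two matchings, and the colour-profile argument all coincide with the paper's.

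There is one small slip worth flagging: the sets $T_1,T_2$ are $(k-1)$-subsets, not $(k-2)$-subsets (they belong to $N_H(v_1)\cap N_H(v_2)$, so $T_i\cup\{v_1\}\in E(H)$ forces $|T_i|=k-1$). Consequently the number of candidate $T_1$ meeting the $k$-set $S\cup\{v_3,v_4\}$ is $\Theta(n^{k-2})$, not $O(n^{k-3})$, and the forbidden set for $T_2$ has $2k-1$ vertices, not $2k-2$. The count still works, but not by an order-of-magnitude margin: one needs $3k\,n^{k-2}$ to exceed $k\binom{n-3}{k-2}$ and then $(2k-1)\binom{n-3}{k-2}$, which it does because $\binom{n-3}{k-2}<n^{k-2}$ and $2k-1<3k$; this is precisely why $D=3k$ is chosen, and means no ``$n$ sufficiently large'' hypothesis is actually needed here (the lemma statement has none). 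Apart from that arithmetic slip, your argument is correct and matches the paper.
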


\begin{proof}
%Let $H, D, c$ be as in the lemma. Let $v_1,v_2,v_3,v_4$ be four distinct vertices of $H$ such that $N_H(v_1)\cap N_H(v_2)$ and $N_H(v_3)\cap N_H(v_4)$ are both type $C_iC_j(D)$, for some $i\neq j\in [r]$.
By the minimum $2$-degree condition, there exists a set $X\subseteq V(H)$ of size $k-2$ such that $A = X\cup\{v_1,v_3\}$ and $B = X\cup\{v_2,v_4\}$ are both in $E(H)$. We show that we can construct a $(3k, A, B)$-gadget and afterwards we prove that it is good. 

Given that $N_H(v_1)\cap N_H(v_2)$ is type $C_iC_j(D)$, there are at least $3kn^{k-2}$ sets $T_{1,2}\in N_H(v_1)\cap N_H(v_2)$  such that $c(v_1T_{1,2}) = C_i$ and $c(v_2T_{1,2}) = C_j$. As $|A\cup B|=k+2 < 3k$, we may choose such a set
 $T_{1,2}$ so that it is also vertex-disjoint from $A\cup B$. Similarly, there is  a set $T_{3,4}\in N_H(v_3)\cap N_H(v_4)$ such that $c(v_3T_{3,4}) = C_i$, $c(v_4T_{3,4}) = C_j$ and $T_{3,4}$ is vertex-disjoint from $A$, $B$ and $T_{1,2}$. %Note that $|A\cup B\cup T_{1,2}| = 2k+1<D$, therefore such $T_{3,4}$ exists.

Then, define a gadget $G$ as follows:
\begin{itemize}
    \item $V(G)$ is the union of $A$, $B$, $T_{1,2}$ and $T_{3,4}$;
    \item $A$, $B$, $v_1T_{1,2}$, $v_2T_{1,2}$, $v_3T_{3,4}$ and $v_4T_{3,4}$ are in $E(G)$.
\end{itemize}
By definition, $G$ is a $(3k,A,B)$-gadget.

To prove that $G$ is good, we need to find two perfect matchings in $G$ with different colour profiles. Define $M_A := \{A, v_2T_{1,2}, v_4T_{3,4}\}$ and $M_B := \{B, v_1T_{1,2}, v_3T_{3,4}\}$. Both $M_A$ and $M_B$ are perfect matchings in $G$. While $M_A$ has at least two $C_j$-edges ($v_2T_{1,2}$ and $v_4T_{3,4}$), $M_B$ has at least two $C_i$-edges ($v_1T_{1,2}$ and $v_3T_{3,4}$). Thus, $M_A$ and $M_B$ have different colour profiles, as desired. 
\end{proof}
The next lemma ensures a hypergraph $H$ as in Theorem~\ref{thmmain2} contains a good gadget or a perfect matching of huge colour-bias.
\begin{lem}\label{gadgetsinH}
Let $2\leq\ell<k$ and $\eta>0$. There exists an $n_0\in \mathbb N$ such that the following holds for all $n\geq n_0$ with $n \in k \mathbb N$.
Let $H$ be a $k$-uniform hypergraph on $n$ vertices with an $r$-colouring $c:E(H)\to \{C_1, \dots, C_r\}$ and \[\delta_\ell (H) \geq (c_{k,\ell} + \eta){n \choose k-\ell}.\]
Suppose that $H$ does not have a perfect matching containing  at least $n/k - {r \choose 2}$ edges of the same colour. Then
\begin{itemize}
    \item there exists a good $(3k,e,f)$-gadget in $H$, for some $e,f\in E(H)$; or
    \item there exists a good $(k^2+k,e,f)$-gadget in $H$, for some $e,f\in E(H)$.
\end{itemize}
\end{lem}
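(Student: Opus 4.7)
The plan is to proceed by contradiction: assume $H$ contains neither a good $(3k,e,f)$-gadget nor a good $(k^2+k,e,f)$-gadget, and deduce the existence of a perfect matching of $H$ with at least $n/k-\binom{r}{2}$ edges of one colour. Since $c_{k,\ell}\geq 1/2$, Proposition~\ref{propdegree} upgrades the hypothesis $\delta_\ell(H)\geq (c_{k,\ell}+\eta)\binom{n}{k-\ell}$ into $\delta_2(H)>\frac{1}{2}\binom{n}{k-2}$ for $n$ large, which makes Lemma~\ref{good3kgadget} applicable; moreover, the definition of $c_{k,\ell}$ ensures that $H$ itself admits a perfect matching.

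Next, I would extract structural information from the failure of Lemma~\ref{good3kgadget}'s hypothesis with $D=3k$: for every unordered colour pair $\{i,j\}$, the collection of $2$-sets $\{u,v\}\subseteq V(H)$ for which $N_H(u)\cap N_H(v)$ has type $C_iC_j(D)$ in some orientation forms a pairwise intersecting family. By the elementary classification of pairwise intersecting families of $2$-sets, each such family either passes through a common vertex or is contained in a triangle on $3$ vertices. I would use this to extract a bad set $W\subseteq V(H)$ of size at most $\binom{r}{2}$ (essentially one vertex per unordered colour pair) such that every $2$-set $\{u,v\}$ disjoint from $W$ satisfies $|C_iC_j(uv)|<Dn^{k-2}$ for all $i\neq j$; combined with the minimum $2$-degree bound, this forces $|S(uv)|=\Omega(n^{k-1})$ for every such pair.

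Fix now an arbitrary perfect matching $M$ of $H$. Deleting the at most $|W|\leq\binom{r}{2}$ edges of $M$ that meet $W$ leaves a submatching $M^*$ of size $\geq n/k-\binom{r}{2}$ whose edges lie entirely in $V(H)\setminus W$. If $M^*$ were monochromatic, then $M$ itself would already violate the hypothesis; hence $M^*$ contains two edges $e,f$ with $c(e)\neq c(f)$. Since $V(e)\cup V(f)$ avoids $W$, each pair $\{u,v\}$ with $u\in e$ and $v\in f$ satisfies $|S(uv)|=\Omega(n^{k-1})\gg n^{k-2}$. Fixing any bijection between $e=\{e_1,\dots,e_k\}$ and $f=\{f_1,\dots,f_k\}$, I would greedily choose pairwise vertex-disjoint sets $T_i\in S(e_if_i)$, also disjoint from $V(e)\cup V(f)$, to assemble an S-$(k^2+k,e,f)$-gadget. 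Its two canonical perfect matchings $M_e=\{e\}\cup\{f_iT_i:i\in[k]\}$ and $M_f=\{f\}\cup\{e_iT_i:i\in[k]\}$ share the same $T$-contribution to the colour profile, because $c(e_iT_i)=c(f_iT_i)$ by the $S$-property, so their profiles differ only by replacing $c(e)$ with $c(f)$. Since $c(e)\neq c(f)$, this difference is genuine and the gadget is good, yielding the required contradiction.

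The step I expect to be the main obstacle is the bound $|W|\leq \binom{r}{2}$. This holds easily in the ``common vertex'' regime of each intersecting family, but the degenerate triangle sub-case (three mutually intersecting $2$-sets spanning $3$ vertices) could a priori inflate $|W|$. Resolving this will likely require either a sharper structural argument — for instance, showing that the triangle sub-case can be absorbed into the common-vertex case after slightly adjusting the threshold $D$ — or directly producing a good $(k^2+k,e,f)$-gadget from the triangle configuration by placing two of its vertices inside $V(e)\cup V(f)$ and exploiting the mixed pairs to force the two canonical matchings to have distinct colour profiles.
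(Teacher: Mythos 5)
Your overall strategy is close to the paper's — negate Lemma~\ref{good3kgadget}'s hypothesis to locate a pair $(e,f)$ of distinctly coloured matching edges for which every cross-pair is of type $S$, then assemble an $S$-$(k^2+k,e,f)$-gadget whose two canonical matchings disagree because $c(e)\neq c(f)$ — and your final construction is exactly the paper's. However, the intermediate step, globally extracting a set $W$ that kills every $C_iC_j$-active $2$-set, has the gap you already suspect, and it is real. A pairwise intersecting family of $2$-sets is either a star or a full triangle on three vertices; in the triangle case no single vertex kills the family (removing one vertex leaves an active $2$-set), so the honest general bound is $|W|\leq 2\binom{r}{2}$, not $\binom{r}{2}$. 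With $|W|>\binom{r}{2}$, the submatching $M^*$ of edges avoiding $W$ has only $\geq n/k-|W|$ edges, which may be fewer than $n/k-\binom{r}{2}$, and then a monochromatic $M^*$ no longer contradicts the lemma's hypothesis. Your second suggested fix (building a good gadget directly from the triangle) is also not immediate, since Lemma~\ref{good3kgadget}'s construction genuinely needs two disjoint active $2$-sets.

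The paper never attempts a global $W$ and thereby avoids the triangle case altogether. It fixes a perfect matching $M$, uses the hypothesis (no colour appears on $\geq n/k-\binom{r}{2}$ edges of $M$) to choose $L:=\binom{r}{2}+1$ pairs $(e_1,f_1),\dots,(e_L,f_L)$ of \emph{distinct} matching edges with $c(e_s)\neq c(f_s)$, and then observes that if two different pairs $(e_s,f_s)$ and $(e_{s'},f_{s'})$ each contained a $C_iC_j(D)$-active cross-pair, the four endpoints involved would lie in four distinct (hence disjoint) matching edges and thus be automatically distinct — triggering Lemma~\ref{good3kgadget}. So each unordered colour pair $\{i,j\}$ can account for at most one of the $L$ matching pairs, and pigeonhole leaves an ``unclaimed'' pair $(e_s,f_s)$ on which every cross-pair is type $S(D)$; your $(k^2+k)$-gadget construction then applies verbatim. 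If you prefer your route, it can be rescued by proving the lemma with the threshold $n/k-2\binom{r}{2}$ (which still feeds correctly into the final theorem with slightly worse constants), but the paper's ``within-one-matching'' argument is both tighter and shorter because disjointness of active pairs comes for free.
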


\begin{proof}
Let $H$ and $c$ be as in the lemma and  suppose $n$ is sufficiently large. Let $D:=k^2+k\geq 3k$. Note that, given our minimum $\ell$-degree condition, Proposition~\ref{propdegree} implies that
\begin{align}\label{mindeg}
\delta_1(H) \geq (c_{k,\ell}+\eta){n-1 \choose k-1}> 
\left (\frac{1}{2} +\frac{\eta}{2} \right )\binom{n}{k-1} \text{ \ and \ }
\delta_2(H) \geq (c_{k,\ell}+\eta){n-2 \choose k-2} >
\frac{1}{2} \binom{n}{k-2}.
\end{align}
Here the inequalities follow as $c_{k,\ell}\geq 1/2$ by (\ref{mlower}).
% If there are four distinct vertices $v_1, v_2, v_3, v_4\in V(H)$ with $N_H(v_1)\cap N_H(v_2)$ and $N_H(v_3)\cap N_H(v_4)$ both of type $C_\alpha C_\beta$, for $\alpha, \beta\in [r]$, then we get a good $(3k,e,f)$-gadget by Lemma~\ref{good3kgadget}, given our bound for $\delta_2(H)$. From now on we suppose that this does not hold.

As $n$ is sufficiently large, and by definition of $c_{k,\ell}$,  the minimum $\ell$-degree condition ensures a perfect matching $M$ in $H$.

Let $L := {r\choose 2}+1$. 
By the hypothesis of the lemma, $M$ does not contain $n/k - {r\choose 2}$ edges of the same colour; so there exist distinct edges $e_1,\dots, e_L, f_1, \dots ,f_L \in M$
such that $c(e_i)\neq c(f_i)$ for each $i\in [L]$. 

Given any distinct $x,y \in V(H)$, (\ref{mindeg}) implies that $|N_H(x)\cap N_H(y)|\geq \eta \binom{n}{k-1}$. In particular, this means that $N_H(x)\cap N_H(y)$ is of type $S(D)$ or of type $C_iC_j(D)$ for some distinct $i,j\in [r]$.

Suppose there exists $i\neq j\in [r]$ and distinct $x,y,z,w\in V(H)$ such that $N_H(x)\cap N_H(y)$ and $N_H(z)\cap N_H(w)$ are both type $C_iC_j(D)$.
Then by Lemma~\ref{good3kgadget}, there exists a good $(3k, e, f)$-gadget in $H$, for some $e,f\in E(H)$.

So we may assume no such  $i\neq j\in [r]$ and $x,y,z,w\in V(H)$ exist. In particular, for each of the 
${r\choose 2}=L-1$ choices for $i\neq j\in [r]$, 
there is at most one pair $(e_s,f_s)$ such that 
there exists a $u \in e_s$ and $v \in f_s$ so that either  
$N_H(u)\cap N_H(v)$ or $N_H(v)\cap N_H(u)$ is type $C_iC_j(D)$.
Thus, the following claim holds.

\begin{claim}
    There is a pair $(e_s,f_s)$ such that for each
    $u \in e_s$ and $v \in e_s$ we have that 
    $N_H(u)\cap N_H(v)$ is type $S(D)$.
\end{claim}
Let $e_s=\{u_1,\dots, u_k\}$ and $f_s=\{v_1,\dots, v_k\}$.
 For each $i\in[k]$, we  choose a set $T_i$ so that
\begin{enumerate}
    \item[(i)] $T_i \in S(u_iv_i)$;
    \item[(ii)] $T_1,\dots, T_k, e_s,f_s$ are all vertex-disjoint.
\end{enumerate}
Note we can guarantee (ii) since $|S(u_iv_i)|\geq Dn^{k-2}=(k^2+k)n^{k-2}$ for each $i \in [k]$.

We construct a $(k^2+k, e_s, f_s)$-gadget $G$ as follows:\begin{itemize}
    \item $V(G)$ is the union of $e_s$, $f_s$, $T_1, \dots ,T_k$;
    \item $e_s$ and $f_s$ are edges in $G$; 
    \item $u_iT_i$, $v_iT_i$ are edges in $G$ for all $i \in [k]$.
\end{itemize}
By definition,  $G$ is an $S$-$(k^2+k,e_s,f_s)$-gadget with $c(e_s) \neq c(f_s)$. This implies that
$G$ is a good $(k^2+k,e_s,f_s)$-gadget.
Indeed, $M_e:=\{e_s, v_1T_1, \dots , v_kT_k\}$ and
$M_f:=\{f_s, u_1T_1, \dots , u_kT_k\}$ are perfect matchings in $G$ with different colour profiles.
\end{proof}

\section{Proof of Theorem~\ref{thmmain2}}
Let $H$ be a sufficiently large $n$-vertex $k$-uniform hypergraph as in the statement of the theorem. Let $c: E(H)\to \{C_1, \dots, C_r\}$ be an $r$-colouring of $E(H)$.
If $H$ contains a perfect matching with at least $n/k - {r \choose 2}$ edges of the same colour, then we are done. 

So, suppose no perfect matching in $H$ contains at least $n/k - {r \choose 2}$ edges of the same colour. By Lemma~\ref{gadgetsinH}, we can find either a good $(3k,e,f)$-gadget  or a good $(k^2+k,e,f)$-gadget in $H$. Call this gadget $G_1$. 

Next consider $H_1:=H\setminus V(G_1)$. Clearly $\delta _\ell (H_1)\geq (c_{k,\ell}+\eta /2)\binom{n}{k-\ell}$. 
Suppose  $H_1$ contains a perfect  matching $M_1$ with 
at least $|H_1|/k - {r \choose 2}$ edges of the same colour.
Thus, by taking any perfect matching in $G_1$ and adding it to $M_1$, we obtain a perfect matching in $H$ containing at least 
$|H_1|/k - {r \choose 2}\geq n/k  -|G_1|/k -{r \choose 2}\geq n/k -k-1-{r \choose 2}$ edges of the same colour, as desired.

Hence, we may assume $H_1$ does not contain such a perfect  matching $M_1$.  By Lemma~\ref{gadgetsinH}, we can find either a good $(3k,e,f)$-gadget  or a good $(k^2+k,e,f)$-gadget in $H_1$. Call this gadget $G_2$ and set $H_2:= H_1\setminus V(G_2)$.

Repeating this argument, we either obtain a perfect matching in $H$ of significant colour-bias, or a collection of
$t := \frac{\eta n}{4 k^k (k^2+k)}$ vertex-disjoint gadgets
$G_1,\dots, G_t$ where, given any $i \in [t]$, $G_i$ is either 
 a good $(3k,e,f)$-gadget  or a good $(k^2+k,e,f)$-gadget in $H$.
In particular, note that each gadget we select has size at most $k^2+k$, and if one removes $t(k^2+k)$ vertices
from $H$ one still has that $\delta _{\ell} (H) \geq (1/2+\eta)\binom{n}{k-\ell}-t(k^2+k)n^{k-\ell-1}\geq (1/2+\eta/2)\binom{n}{k-\ell}$. Thus, we can indeed repeatedly apply  Lemma~\ref{gadgetsinH} to obtain these gadgets $G_1,\dots, G_t$. 

Set $\mathcal{G}:=\{ G_1,\dots, G_t\}$.
For each colour $C_i$, consider the set $\mathcal{G}_i$ of all the gadgets in $\mathcal{G}$ that contain two perfect matchings with different colour profiles with respect to the colour $C_i$. Clearly there  exists some $j \in [r]$
such that
$\mathcal{G}_j$ contains at least $t/r$ gadgets. 

For each gadget $G_i$ in $\mathcal{G}_j$ consider the perfect matching $M_i$ in $G_i$ with the largest possible number of edges coloured $C_j$; let $M'_i$ be the perfect matching  in $G_i$ with the fewest possible  edges coloured $C_j$. So $M_i$ has at least one more $C_j$-edge than $M'_i$.

Let $M^+$ denote the union of all these $M_i$ and let $M^-$ denote the union of all these $M'_i$. So 
$M^+$ contains at least $t/r=\frac{\eta n}{4 r k^k (k^2+k)}$
more $C_j$-edges than $M^-$.

Let $V(\mathcal{G}_j)$ denote the set of vertices in $H$ that lie in one of the gadgets in $\mathcal{G}_j$.
Note that $\delta_{\ell} (H\setminus V(\mathcal{G}_j))\geq
(c_{k,\ell}+\eta/2)\binom{n}{k-\ell}$ so there exists a perfect matching $M$ in $H\setminus V(\mathcal{G}_j)$. Thus, $M\cup M^+$ and $M\cup M^-$ are both perfect matchings in $H$.

If $M\cup M^-$ contains at least $\frac{n}{rk}+\frac{\eta n}{8r(r-1)k^k(k^2+k)} $ edges of the same colour then the theorem holds. Thus, we may assume this is not the case. 
This immediately implies the following claim.
    
\begin{claim}
    For every $i\in[r]$, the number of $C_i$-edges  in $M\cup M^-$ is at least $\frac{n}{rk} - \frac{\eta n}{8rk^k(k^2+k)} $.
\end{claim}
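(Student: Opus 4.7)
The plan is to prove the claim by a direct double-counting argument, exploiting the standing assumption made just before the claim. Recall we are in the case where $M\cup M^-$ is a perfect matching of $H$ that fails to contain $\frac{n}{rk}+\frac{\eta n}{8r(r-1)k^k(k^2+k)}$ edges of any single colour; equivalently, for every $i'\in [r]$, the number of $C_{i'}$-edges in $M\cup M^-$ is strictly less than $\frac{n}{rk}+\frac{\eta n}{8r(r-1)k^k(k^2+k)}$.

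Next I would use the fact that $M\cup M^-$ is a perfect matching of the $n$-vertex $k$-uniform hypergraph $H$, so it consists of exactly $n/k$ edges. Fix any $i\in [r]$. Then the number of $C_i$-edges of $M\cup M^-$ equals $n/k$ minus the number of edges of colour different from $C_i$. Applying the upper bound from the previous paragraph to each of the $r-1$ colours $C_{i'}$ with $i'\neq i$, this count is at least
\[
\frac{n}{k} - (r-1)\left(\frac{n}{rk}+\frac{\eta n}{8r(r-1)k^k(k^2+k)}\right)
=\frac{n}{rk} - \frac{\eta n}{8rk^k(k^2+k)},
\]
which is exactly the desired bound.

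There is essentially no obstacle: the claim is a routine averaging/pigeonhole consequence of the assumption, and the constants have clearly been chosen so that the arithmetic works out cleanly. The only thing worth double-checking is that $|M\cup M^-|=n/k$ (i.e.\ that $M$ and $M^-$ are genuinely disjoint and together cover $V(H)$), which holds because $M$ is a perfect matching of $H\setminus V(\mathcal{G}_j)$ while $M^-$ is the union of perfect matchings of the pairwise vertex-disjoint gadgets making up $V(\mathcal{G}_j)$.
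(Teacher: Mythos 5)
Your proof is correct and is precisely the argument the paper intends: the paper simply writes ``This immediately implies the following claim'' and leaves the double-counting over the $n/k$ edges of $M\cup M^-$, together with the check that the constants telescope, to the reader. You have spelled out exactly that computation.
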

In particular, $M\cup M^-$ contains at least $\frac{n}{rk} - \frac{\eta n}{8rk^k(k^2+k)} $ $C_j$-edges. Since there are 
at least 
$\frac{\eta n}{4 r k^k (k^2+k)}$
more $C_j$-edges in $M^+$ than in $M^-$, we obtain that 
$M\cup M^+$ contains at least $\frac{n}{rk} +\frac{\eta n}{8rk^k(k^2+k)} $ $C_j$-edges, as desired.\qed

\section{Concluding Remarks}\label{sec:conc}
In this paper we have determined the minimum $\ell$-degree threshold for forcing a colour-bias perfect matching in a $k$-uniform hypergraph for all $2\leq \ell <k$. The only remaining open case of the problem is the minimum \emph{vertex} degree version.

A result of H\`an,  Person and  Schacht~\cite{hps} yields that $m_1(3,n)=(5/9+o(1))\binom{n-1}{2}$. The following example shows that the corresponding colour-bias problem has a significantly higher minimum vertex degree threshold.

\begin{example}
    Given any $n \in 6\mathbb N$, there exists an $n$-vertex $3$-uniform hypergraph $H$ with 
    $$\delta_1(H) \geq \frac{3}{4} {n -1 \choose 2}$$
    and a $2$-colouring of $E(H)$ so that every perfect matching in $H$ has precisely $n/6$ edges in each colour.
\end{example}
\begin{proof}
    Define $H$ so that (i) $V(H)$ is the disjoint union of two vertex classes $A$ and $B$, both of size $n/2$; (ii) $E(H)$ consists of all those $3$-uniform edges containing at least one vertex from each of $A$ and $B$. Thus, 
    $$\delta_1(H)=\binom{n/2}{2}+\frac{n}{2}\left (\frac{n}{2}-1 \right)\geq \frac{3}{4}{n -1 \choose 2}.$$
    Colour each edge containing $2$ vertices from $A$ red; each edge containing $2$ vertices from $B$ blue. It is easy to see that every perfect matching in $H$ uses the same number of red and blue edges.
\end{proof}
We suspect that this example is  extremal  for the minimum vertex degree problem in $3$-uniform hypergraphs.
\begin{question}
    Given any $\eta>0$ does there exists a $\gamma>0$ so that the following holds for all sufficiently large $n\in 3\mathbb N$? Suppose that $H$ is an $n$-vertex $3$-uniform hypergraph with
    $$\delta_1(H)\geq \left(\frac{3}{4} +\eta\right) {n -1 \choose 2}.$$ Then given any $2$-colouring of $E(H)$ there is a perfect matching in $H$ with at least $n/6+\gamma n$ edges of the same colour.
\end{question}
    
\section*{Acknowledgment}
Part of the research in this paper was carried out during a visit by the first author to the University of Birmingham in July 2023. The authors are grateful to the BRIDGE strategic alliance between the University of Birmingham and the University of Illinois at Urbana-Champaign, which partially funded this visit. 

{\noindent \bf Data availability statement.}
There are no additional data beyond that contained within the main manuscript.

\end{document}